\newtheorem{theorem}{Theorem}[section]
\newtheorem{corollary}{Corollary}[section]
\begin{document}
 \begin{center}{\bf{ Some new $I_\lambda$-lacunary statistical convergence  }} \end{center}

\vskip 0.3 cm

\begin{center} Adem Kili\c{c}man \\ Department of Mathematics and Institute for Mathematical Research
University Putra Malaysia, 43400 Serdang, Selangor, MALAYSIA\\ e-mail: akilic@upm.edu.my\end{center}

\vskip 0.3 cm

\begin{center} Stuti Borgohain $^*${\footnote {The second author was supported under the Post Doctoral Fellow by National Board of Higher
Mathematics, DAE (Govt. of India), having project No. NBHM/PDF.50/2011/64.}} \\ Department of Mathematics, Indian Institute of Technology, Bombay, Powai : 400076, Mumbai, Maharashtra; INDIA \\ e-mail : stutiborgohain$@$yahoo.com \end{center}

\begin{abstract} In this paper, we introduce some new  $I_\lambda$-lacunary statistically convergent sequence spaces
of order $\alpha$ defined by a Musielak-Orlicz function. We study some relations between $I_\lambda$-lacunary
statistically convergence with lacunary $I_\lambda$-summable sequences. Moreover we also study about the
$I_\lambda$-lacunary statistically convergence of sequences in topological groups and give some important inclusion
theorems. \\

{\bf Keywords: Musielak-Orlicz function; Ideal Convergence; lacunary sequences; topological groups.}

\end{abstract}


 \section*{Introduction} Fast \cite{Fast} introduced the concept of statistical convergence as the generalization of convergence of real sequences.  Later on, it  became as an important tool in the summability theory by many mathematicians. Also statistical convergence has been discussed in the theory of Fourier analysis, ergodic theory, number theory, measure theory, trigonometric series, turnpike theory and Banach spaces and many more. For references one may see [\cite{Fridy}, \cite{Orhan}, \cite{Freedman}, \cite{Schoenberg}, \cite{Malafossa}] and many more.\\

Kostyrko et al. \cite{Kostyrko} further extended the notion of statistical convergence to $I$-statistical convergence
and studied some basic properties on it. A family of sets $I \subseteq 2^X$ (power sets of $X$) is said to be an ideal
if $I$ is additive i.e. $P,Q \in I \Rightarrow P \cup Q \in I$ and hereditary i.e. $P \in I, Q \subseteq P \Rightarrow
Q \in I$, where $X$ is any non empty set. \\

A lacunary sequence is defined as an increasing integer sequence $\theta = (j_r)$ such that $j_0=0$ and
$h_r=j_r-j_{r-1} \rightarrow \infty$ as $r \rightarrow \infty.$\\

\noindent Note: Throughout this paper, the intervals determined by $\theta$ will be denoted by $J_r=(j_{r-1}, j_r]$
and the ratio $\frac{j_r}{j_{r-1}}$ will be defined by $\phi_r$.

\section*{Preliminary Concepts} A sequence $(x_i)$ of real numbers is said to be statistically convergent to $Z$ if,
for arbitrary $\xi>0$, the set $K(\xi)=\{i \in \mathbb{N}: \vert x_i -Z \vert \geq \xi \}$ has natural density zero,
i.e., $$\displaystyle\lim_i \frac{1}{i} \displaystyle\sum_{j=1}^i \chi_{K(\xi)}(j)=0,$$ where $\chi_{K(\xi)}$ denotes
the characteristic function of $K(\xi)$.

A sequence $(x_i)$ of elements of $\mathbb{R}$ is said to be $I$-convergent to $Z\in \mathbb{R}$ if for each $\xi>0$,
$$\{i \in \mathbb{N}: \vert x_i -Z \vert \geq \xi \} \in I.$$ For any lacunary sequence $\theta= (j_r)$, the space
$N_\theta$ defined as, (Freedman et al.\cite{Freedman}) $$N_\theta=\left\{(x_j): \displaystyle\lim_{r \rightarrow
\infty} j_r^{-1} \displaystyle\sum_{j \in J_r} \vert x_j - Z \vert =0, \mbox{~for some~} Z \right\}.$$ By an Orlicz
function , we mean a function $M: [0,\infty )\rightarrow [0,\infty )$, which is continuous, non-decreasing and convex
with $M(0) = 0, M(x)>0$, for $x>0$ and $M(x)\rightarrow \infty$, as $x\rightarrow \infty$.\\

Musielak \cite{Musielak} defined the concept of Musielak-Orlicz function as $\mathscr{M}=(M_j)$. A sequence
$\mathscr{N}=(N_k)$ defined by $$N_k(v)=\sup \{ \vert v \vert u -M_j(u): u \geq 0 \}, k=1,2,..$$ which is called the
complementary function of a Musielak-Orlicz function $\mathscr{M}$. The Musielak-Orlicz sequence space $t_\mathscr{M}$
and its subspace $h_\mathscr{M}$ are defined as follows: \begin{eqnarray*} t_\mathscr{M}&=&\{ x \in w:
I_\mathscr{M}(cx) < \infty \mbox{~for some ~} c>0\},\\ h_\mathscr{M}&=&\{x \in w:I_\mathscr{M}(cx) < \infty, \forall c
>0\},\end{eqnarray*} where $I_\mathscr{M}$ is a convex modular defined by, $$I_\mathscr{M}(x) =
\displaystyle\sum_{k=1}^\infty M_j(x_k), x=(x_k) \in t_\mathscr{M}.$$ It is considered $t_\mathscr{M}$ equipped with
the Luxemberg norm $$\Vert x \Vert = \inf \left\{ k>0: I_\mathscr{M}\left(\frac{x}{k}\right) \leq 1 \right\}$$ or
equipped with the Orlicz norm $$\Vert x \Vert^0 = \inf \left\{ \frac{1}{k} (1+I_\mathscr{M}(kx)): k>0 \right\}.$$

Let $\lambda=(\lambda_i)$ be a non-decreasing sequence of positive integers. We denote $\Lambda$ as the set of all
non-decreasing sequence of positive integers.  We consider a sequence $\{x_j \}_{j \in \mathbb{N}}$ which is said to
be $I_\lambda$-lacunary statistically convergent of order $\alpha$ to $Z$, if for each $\gamma>0$ and $\xi>0$,

$$\left\{i \in \mathbb{N}: \frac{1}{\lambda_i^\alpha}\left\vert \left\{j \leq i:  \frac{1}{h_i} \displaystyle\sum_{j
\in I_i} M_j \left(\frac{\vert x_j-Z \vert}{\rho^{(j)}} \right) \geq \gamma \right\}\right\vert \geq \xi \right\} \in
I.$$

We denote the class of all $I_\lambda$-lacunary statistically convergent sequences of order $\alpha$ defined by
Musielak-Orlicz function as $S^\alpha_{I_\lambda}(\mathscr{M}, \theta)$. \\

Some particular cases:

\begin{enumerate}

\item If $M_j(x)=M(x)$, for all $j \in \mathbb{N}$, then $S_{I_\lambda}^\alpha(\mathscr{M}, \theta)$ is reduced to

    $S_{I_\lambda}^\alpha(M, \theta)$. Further, if $M_j(x)=x$, for all $j \in \mathbb{N}$, then
    $S_{I_\lambda}^\alpha(\mathscr{M}, \theta)$ will be changed as $S_{I_\lambda}^\alpha(\theta)$.

\item If $\lambda_i=i$, for all $i \in \mathbb{N}$, then $S_{I_\lambda}^\alpha(\mathscr{M}, \theta)$ will be
    reduced to $S_{I}^\alpha(\mathscr{M}, \theta)$.

\item If $\alpha=1$, then $\alpha$-density of any set reduces to the natural density of the set. So, the set
    $S_{I_\lambda}^\alpha(\mathscr{M}, \theta)$ reduces to $S_{I_\lambda}(\mathscr{M}, \theta)$ for $\alpha=1$.

\item If $\theta=(2^r)$ and $\alpha=1$, then the sequence $(x_j)$ is said to be $I_\lambda$-statistically
    convergent defined by Musielak-Orlicz function, i.e. $(x_j) \in S_{I_\lambda}(\mathscr{M})$.

\item If $M_j(x)=x, \theta=(2^r)$, $\lambda_j=j$, $\alpha=1$, then $I_\lambda$-lacunary statistically convergence

    of order $\alpha$ defined by Musielak-Orlicz function reduces to $I$-statistical convergence.
\end{enumerate}

In this article, we define the concept of $I_\lambda$- lacunary statistically convergent of order $\alpha$ defined by
Musielak-Orlicz function and investigate some results on these sequences. Moreover, we study $I_\lambda$-lacunary
statistically convergence of sequences in topological groups and give some important inclusion theorems.

\section*{Main Results}

\begin{theorem} Let $\lambda=(\lambda_i)$ and $\mu=(\mu_i)$ be two sequences in $\Lambda$ such that $\lambda_i \leq
\mu_i$ for all $i \in \mathbb{N}$ and $0<\alpha \leq \beta \leq 1$ for fixed reals $\alpha$ and $\beta$. If
$\lim\displaystyle\inf_{i \rightarrow \infty} \frac{\lambda_i^\alpha}{\mu_i^\beta} >0$, then $S_{I_\mu}^\beta
(\mathscr{M}, \theta) \subseteq S_{I_\lambda}^\alpha(\mathscr{M}, \theta)$. \end{theorem}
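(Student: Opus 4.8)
The plan is to fix a sequence $x=(x_j)$ belonging to $S_{I_\mu}^\beta(\mathscr{M},\theta)$ with $I_\mu$-lacunary statistical limit $Z$ and to verify that this same $Z$ is an $I_\lambda$-lacunary statistical limit of order $\alpha$ for $x$. The entire argument reduces to one comparison between the two density functionals, followed by the standard ideal bookkeeping. For $\gamma>0$ let $A_i^\lambda(\gamma)$ and $A_i^\mu(\gamma)$ denote the sets of ``bad'' indices counted in the respective definitions, so that the quantities to be controlled are $\frac{1}{\lambda_i^\alpha}|A_i^\lambda(\gamma)|$ for the $(\lambda,\alpha)$ space and $\frac{1}{\mu_i^\beta}|A_i^\mu(\gamma)|$ for the $(\mu,\beta)$ space.

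First I would record the decisive set inclusion. The counting for the $\lambda$-functional runs over the generalised de la Vall\'ee--Poussin block ending at $i$ and having length $\lambda_i$; denote it $I_i^\lambda$, and let $I_i^\mu$ be the analogous block of length $\mu_i$. Since $\lambda_i\le\mu_i$ and both blocks share the right endpoint $i$, we have $I_i^\lambda\subseteq I_i^\mu$, and as the tested Musielak--Orlicz term $M_j(|x_j-Z|/\rho^{(j)})$ is the same in either case it follows that $A_i^\lambda(\gamma)\subseteq A_i^\mu(\gamma)$ for every $i$ and every $\gamma>0$. Inserting the ratio of the two normalising factors then gives
$$\frac{1}{\lambda_i^\alpha}\,\bigl|A_i^\lambda(\gamma)\bigr|\;\le\;\frac{1}{\lambda_i^\alpha}\,\bigl|A_i^\mu(\gamma)\bigr|\;=\;\frac{\mu_i^\beta}{\lambda_i^\alpha}\cdot\frac{1}{\mu_i^\beta}\,\bigl|A_i^\mu(\gamma)\bigr|.$$

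Next I would invoke the hypothesis $\liminf_{i\to\infty}\frac{\lambda_i^\alpha}{\mu_i^\beta}>0$: pick $c>0$ and $i_0$ so that $\frac{\lambda_i^\alpha}{\mu_i^\beta}\ge c$, equivalently $\frac{\mu_i^\beta}{\lambda_i^\alpha}\le 1/c$, for all $i\ge i_0$. The previous estimate then becomes $\frac{1}{\lambda_i^\alpha}|A_i^\lambda(\gamma)|\le\frac{1}{c}\cdot\frac{1}{\mu_i^\beta}|A_i^\mu(\gamma)|$ for $i\ge i_0$. Passing to threshold events, for any $\xi>0$ the occurrence of $\frac{1}{\lambda_i^\alpha}|A_i^\lambda(\gamma)|\ge\xi$ with $i\ge i_0$ forces $\frac{1}{\mu_i^\beta}|A_i^\mu(\gamma)|\ge c\xi$, so that
$$\left\{i:\frac{1}{\lambda_i^\alpha}\bigl|A_i^\lambda(\gamma)\bigr|\ge\xi\right\}\;\subseteq\;\{1,2,\dots,i_0-1\}\cup\left\{i:\frac{1}{\mu_i^\beta}\bigl|A_i^\mu(\gamma)\bigr|\ge c\xi\right\}.$$

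Finally I would finish with the ideal structure of $I$. The right-most set lies in $I$ because $x\in S_{I_\mu}^\beta(\mathscr{M},\theta)$, used with tolerance $c\xi$, while the finite block $\{1,\dots,i_0-1\}$ lies in $I$ by admissibility; additivity places their union in $I$, and the hereditary property then places the left-hand set in $I$. Since $\gamma>0$ and $\xi>0$ were arbitrary, $x\in S_{I_\lambda}^\alpha(\mathscr{M},\theta)$, which is the asserted inclusion. I expect the one genuinely delicate point to be the first step, namely confirming that $\lambda_i\le\mu_i$ really does nest the two counting blocks while the tested Musielak--Orlicz term is held fixed, so that $A_i^\lambda(\gamma)\subseteq A_i^\mu(\gamma)$; once this is secured the rest is the routine $\liminf$-comparison together with the admissibility, additivity and hereditariness of $I$.
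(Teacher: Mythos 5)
Your proposal is correct and follows essentially the same route as the paper's own proof: both rest on the nesting of the $\lambda$-block inside the $\mu$-block (so the bad-index sets nest), the ratio comparison $\frac{1}{\lambda_i^\alpha}|A_i^\lambda|\le\frac{\mu_i^\beta}{\lambda_i^\alpha}\cdot\frac{1}{\mu_i^\beta}|A_i^\mu|$, the observation that the $\liminf$ hypothesis confines $\{i:\lambda_i^\alpha/\mu_i^\beta<c\}$ to a finite (hence ideal-member) set, and the additivity and hereditary properties of $I$ to conclude. The only cosmetic difference is that you phrase the $\liminf$ step as choosing a threshold index $i_0$ while the paper phrases it as finiteness of the exceptional set; these are equivalent.
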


\begin{proof} Suppose that $\lambda_i \leq \mu_i$ for all $ i \in \mathbb{N}$ and $\lim\displaystyle\inf_{i
\rightarrow \infty} \frac{\lambda_i^\alpha}{\mu_i^\beta} >0$. Since $I_i \subset J_i$, where $J_i=[i-\mu_i+1, i]$, so
for $\varepsilon >0$, we can write,

$$\left\{ j \in J_i: \vert x_j -Z \vert \geq \varepsilon \} \supset \{ j \in I_i:\vert x_j -Z \vert \geq \varepsilon
\right\}$$

which implies $$\frac{1}{\mu_i^\beta} \left\vert \left\{ j \in J_i: \vert x_j - Z \vert \geq \varepsilon \right\}
\right\vert \geq \frac{\lambda_i^\alpha}{\mu_i^\beta}. \frac{1}{\lambda_i^\alpha} \left\vert \left\{ j \in I_i:\vert
x_j - Z \vert \geq \varepsilon \right\} \right\vert,$$ for all $ i \in \mathbb{N}$.\\

Assume that $\lim\displaystyle\inf_{i \rightarrow \infty} \frac{\lambda_i^\alpha}{\mu_i^\beta}=a$,  so from the
definition we get $\left\{i \in \mathbb{B}:  \frac{\lambda_i^\alpha}{\mu_i^\beta} <\frac{a}{2} \right\}$ is finite.
Now for $\delta>0$, \begin{eqnarray*} \left\{ i \in \mathbb{N}:\frac{1}{\lambda_i^\beta} \left\vert \left\{ j \in J_i:
\vert x_j -Z \vert \geq \varepsilon \right\} \right\vert \geq \delta \right\}
 &\subset&  \left\{ i \in \mathbb{N}: \frac{1}{\mu_i^\alpha} \left\vert \left\{ j \in I_i : \vert x_j -Z \vert \geq
 \varepsilon \right\} \right\vert \geq \frac{a}{2} \delta \right\}\\
& \cup& \left\{ i \in \mathbb{N} : \frac{\lambda_i^\alpha}{\mu_i^\beta} < \frac{a}{2} \right\}. \end{eqnarray*} Since
$I$ is admissible and $(x_j)$ is $I_\mu$-lacunary statistically convergent sequence of order $\beta$ defined by a
Musielak-Orlicz function, so by using the continuity of Musielak-Orlicz function, the set on the right hand side with
the lacunary sequence $\theta=(h_i)$ belongs to $I$. This completes the proof. \end{proof}

\begin{theorem} If $\displaystyle\lim_{i \rightarrow \infty} \frac{\mu_i}{\lambda_i^\beta}=1$, for
$\lambda=(\lambda_i)$ and $\mu=(\mu_i)$ two sequences of $\Lambda$ such that $\lambda_i \leq \mu_i, \forall i \in
\mathbb{N}$ and $0<\alpha\leq \beta\leq 1$ for fixed $\alpha, \beta$ reals,   then  $S_{I_\lambda}^\alpha(\mathscr{M},
\theta) \subseteq S_{I_\mu}^\beta(\mathscr{M}, \theta)$. \end{theorem}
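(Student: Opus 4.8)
The plan is to prove the inclusion by the boundary-comparison technique of the preceding theorem, but now running the estimate in the opposite direction, so that the $\mu^\beta$-average over the larger block is dominated by the $\lambda^\alpha$-average over the smaller one plus a negligible correction. I would take a sequence $(x_j) \in S_{I_\lambda}^\alpha(\mathscr{M},\theta)$ with limit $Z$, fix $\varepsilon > 0$, and again write $I_i = [i-\lambda_i+1,i] \subset J_i = [i-\mu_i+1,i]$, so that $\lvert J_i \setminus I_i\rvert = \mu_i - \lambda_i$.

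Splitting the count over $J_i$ into its parts on $I_i$ and on $J_i \setminus I_i$, and using that $\lambda_i \leq \mu_i$ together with $\alpha \leq \beta$ gives $\lambda_i^\alpha \leq \mu_i^\beta$, I obtain for every $i$
$$\frac{1}{\mu_i^\beta}\left|\{ j\in J_i : |x_j - Z| \geq \varepsilon \}\right| \leq \frac{\mu_i-\lambda_i}{\mu_i^\beta} + \frac{1}{\lambda_i^\alpha}\left|\{ j\in I_i : |x_j - Z| \geq \varepsilon \}\right|.$$
The hypothesis $\lim_i \mu_i/\lambda_i^\beta = 1$ is used precisely here: it forces the correction $(\mu_i-\lambda_i)/\mu_i^\beta \to 0$, so for any prescribed $\delta>0$ there is a finite index set $F$ outside of which this term stays below $\delta/2$.

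Then I would convert this pointwise bound into an ideal statement. For $i \notin F$ whose left-hand side is at least $\delta$, the inequality forces $\frac{1}{\lambda_i^\alpha}\left|\{ j\in I_i : |x_j - Z| \geq \varepsilon \}\right| \geq \delta/2$, whence
$$\left\{ i : \tfrac{1}{\mu_i^\beta}\left|\{ j\in J_i : |x_j - Z| \geq \varepsilon \}\right| \geq \delta \right\} \subseteq F \cup \left\{ i : \tfrac{1}{\lambda_i^\alpha}\left|\{ j\in I_i : |x_j - Z| \geq \varepsilon \}\right| \geq \tfrac{\delta}{2} \right\}.$$
Since $(x_j) \in S_{I_\lambda}^\alpha(\mathscr{M},\theta)$ the second set on the right lies in $I$, and $F$, being finite, lies in the admissible ideal $I$; by additivity and heredity their union, and hence the set on the left, lies in $I$, giving $(x_j) \in S_{I_\mu}^\beta(\mathscr{M},\theta)$. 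As before, the continuity of the Musielak-Orlicz function $\mathscr{M}$ is what lets me pass between the threshold sets written in terms of $|x_j - Z|$ and the modular sums $\frac{1}{h_i}\sum_{j \in I_i} M_j\!\left(|x_j-Z|/\rho^{(j)}\right)$ appearing in the definition. The step I expect to be the crux is the second one: one must verify carefully that the stated limit condition genuinely drives the boundary term $(\mu_i-\lambda_i)/\mu_i^\beta$ to zero across the admissible ranges of $\alpha$ and $\beta$, since this correction is the sole place where the hypothesis is consumed and the whole argument collapses if it fails to vanish.
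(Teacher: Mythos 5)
Your proposal is correct and follows essentially the same route as the paper's proof: the same decomposition of the count over $J_i$ into its parts on $I_i$ and on $J_i\setminus I_i$, the same boundary term $(\mu_i-\lambda_i)/\mu_i^\beta$ absorbed via the hypothesis, and the same finite-exceptional-set-plus-ideal argument. The crux you flag does go through, exactly as in the paper, via the chain $(\mu_i-\lambda_i)/\mu_i^\beta\le(\mu_i-\lambda_i)/\lambda_i^\beta\le(\mu_i-\lambda_i^\beta)/\lambda_i^\beta=\mu_i/\lambda_i^\beta-1<\delta/2$ for large $i$ (using $\lambda_i\ge 1$ and $\beta\le 1$).
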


\begin{proof} Let $(x_j)$ be $I_\lambda$-lacunary statistically convergent to $Z$ of order $\alpha$ defined by the
Musielak-Orlicz function $\mathscr{M}$. Also assume that $\displaystyle\lim_{i \rightarrow \infty}
\frac{\mu_i}{\lambda_i^\beta} =1$. So we can choose $m \in \mathbb{N}$ such that $\left\vert
\frac{\mu_i}{\lambda_i^\beta}-1 \right\vert < \frac{\delta}{2},\forall i\geq m.$\\

Since $I_i \subset J_i$, for $\varepsilon >0$, we may write, \begin{eqnarray*} \frac{1}{\mu_i^\beta} \left\vert
\left\{ j \in J_i: \vert x_j -Z \vert \geq \varepsilon \right\} \right\vert
 &=& \frac{1}{\mu_i^\beta} \left\vert \left\{ i-\mu_i+1 \leq j \leq i-\lambda_i : \vert x_j -Z \vert \geq \varepsilon
\right\}\right\vert \\
&+& \frac{1}{\mu_i^\beta} \left\vert \left\{ j \in I_i :\vert x_j -Z\vert \geq \varepsilon \right\}\right \vert\\
&\leq& \frac{\mu_i-\lambda_i}{\mu_i^\beta} + \frac{1}{\mu_i^\beta} \left\vert \left \{ j \in I_i : \vert  x_j -Z \vert
\geq \varepsilon \right\} \right\vert\\ &\leq& \frac{\mu_i-\lambda_i^\beta}{\lambda_i^\beta} + \frac{1}{\mu_i^\beta}
\left \vert \left \{ j \in I_i : \vert x_j -Z \vert \geq \varepsilon \right\} \right\vert\\ &\leq& \left(
\frac{\mu_i}{\lambda_i^\beta}-1 \right) + \frac{1}{\lambda_i^\alpha} \left\vert \left\{ j \in I_i: \vert x_j -Z \vert
\geq \varepsilon \right\} \right\vert\\ &=& \frac{\delta}{2} + \frac{1}{\lambda_i^\alpha} \left \vert \left\{ j \in
I_i : \vert x_j - Z \vert \geq \varepsilon \right\} \right\vert . \end{eqnarray*} Hence, \begin{eqnarray*} \left\{ i
\in \mathbb{N}:\frac{1}{\mu_i^\beta} \left\vert \left\{ j \leq i: \vert x_j -Z \vert \geq \varepsilon \right\} \right
\vert \geq \delta \right\} &\subset& \left\{ i \in \mathbb{N}: \frac{1}{\lambda_i^\alpha} \left\vert \left\{ j \in I_i
: \vert x_j -Z \vert \geq \varepsilon \right\} \right\vert \geq \frac{\delta}{2} \right\}\\ &\cup& \{ 1,2,3,..m \}.
\end{eqnarray*}

Since $(x_j)$ is $I_\lambda$-lacunary statistically convergent sequence of order $\alpha$ defined by a Musielak-Orlicz
function $\mathscr{M}$ and since $I$ is admissible, so by using the continuity of Musielak-Orlicz function, we can say
that the set on the right hand side with the lacunary sequence $\theta=(h_i)$ belongs to $I$ and this proves that,
$$S_{I_\lambda}^\alpha(\mathscr{M}, \theta)\subseteq S_{I_\mu}^\beta(\mathscr{M}, \theta).$$ \end{proof}

We define the lacunary $I_\lambda$-summable sequence of order $\alpha$ defined by Musielak-Orlicz function as:
$$w_{I_\lambda}^\alpha(\mathscr{M},\theta)= \left\{ i \in \mathbb{N}: \frac{1}{\lambda_i^\alpha} \left( j \leq i:
\frac{1}{h_i} \displaystyle\sum_{j \in I_i} M_j \left( \frac{ \vert x_j - Z \vert }{ \rho^{(j)}} \right) \geq
\varepsilon \right) \right\} \in I.$$ \begin{theorem} Given for $\lambda=(\lambda_i), \mu=(\mu_i) \in \Lambda$.
Suppose that $\lambda_i \leq \mu_i$ for all $i \in \mathbb{N}, 0 < \alpha \leq \beta \leq 1$. Then, \begin{enumerate}
\item If $\lim\displaystyle\inf_{i \rightarrow \infty} \frac{\lambda_i^\alpha}{\mu_i^\beta} >0$, then
    $w_\mu^\beta(\mathscr{M}, \theta) \subset w_\lambda^\alpha(\mathscr{M}, \theta)$.

\item If $\displaystyle\lim_{i \rightarrow \infty} \frac{\mu_i}{\lambda_i^\beta} = 1$, then $\ell_\infty \cap
    w_\lambda^\alpha(\mathscr{M},\theta) \subset w_\mu^\beta(\mathscr{M},\theta)$.
\end{enumerate} \end{theorem}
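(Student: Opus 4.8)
The two assertions are the strong-summability analogues of Theorems 1 and 2, so the plan is to recycle the two set-inclusion arguments already used there, replacing the statistical counts $\frac{1}{\lambda_i^\alpha}\bigl|\{j\in I_i:\dots\}\bigr|$ by the averaged Orlicz sums $\frac{1}{\lambda_i^\alpha}\sum_{j\in I_i}M_j\!\bigl(\frac{|x_j-Z|}{\rho^{(j)}}\bigr)$. Write $I_i$ for the block of length $\lambda_i$ and $J_i$ for the block of length $\mu_i$ ending at $i$, so that $\lambda_i\le\mu_i$ gives $I_i\subset J_i$, and recall that membership in $w_\mu^\beta(\mathscr{M},\theta)$ (resp. $w_\lambda^\alpha(\mathscr{M},\theta)$) means that for every $\varepsilon>0$ the set of $i$ for which the corresponding averaged sum is $\ge\varepsilon$ lies in the admissible ideal $I$.

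For part (1) I would start from $I_i\subset J_i$ and the nonnegativity of each $M_j$ to obtain
$$\frac{1}{\mu_i^\beta}\sum_{j\in J_i}M_j\!\left(\frac{|x_j-Z|}{\rho^{(j)}}\right)\ \ge\ \frac{\lambda_i^\alpha}{\mu_i^\beta}\cdot\frac{1}{\lambda_i^\alpha}\sum_{j\in I_i}M_j\!\left(\frac{|x_j-Z|}{\rho^{(j)}}\right),\qquad i\in\mathbb{N}.$$
Setting $a=\liminf_i\lambda_i^\alpha/\mu_i^\beta>0$, the set $\{i:\lambda_i^\alpha/\mu_i^\beta<a/2\}$ is finite and hence lies in $I$. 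Exactly as in Theorem 1 this yields, for each $\delta>0$, the inclusion
$$\left\{i:\frac{1}{\lambda_i^\alpha}\sum_{j\in I_i}M_j\!\left(\frac{|x_j-Z|}{\rho^{(j)}}\right)\ge\delta\right\}\subset\left\{i:\frac{1}{\mu_i^\beta}\sum_{j\in J_i}M_j\!\left(\frac{|x_j-Z|}{\rho^{(j)}}\right)\ge\frac{a}{2}\delta\right\}\cup\left\{i:\tfrac{\lambda_i^\alpha}{\mu_i^\beta}<\tfrac a2\right\},$$
and since $(x_j)\in w_\mu^\beta(\mathscr{M},\theta)$ places the first set on the right in $I$ while the second is finite, additivity of $I$ places the left-hand set in $I$; thus $(x_j)\in w_\lambda^\alpha(\mathscr{M},\theta)$.

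For part (2) I would mirror Theorem 2 by splitting $J_i=I_i\cup(J_i\setminus I_i)$ and handling the $\mu_i-\lambda_i$ extra indices separately. Here the boundedness hypothesis does the real work: if $(x_j)\in\ell_\infty$ then, by continuity of each $M_j$, there is a constant $K$ with $M_j(|x_j-Z|/\rho^{(j)})\le K$ for all $j$, so the extra block contributes at most $\frac{K(\mu_i-\lambda_i)}{\mu_i^\beta}\le K\bigl(\frac{\mu_i}{\lambda_i^\beta}-1\bigr)$, which the hypothesis $\mu_i/\lambda_i^\beta\to1$ forces below $K\delta/2$ for all $i$ beyond some $m$. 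Combined with $\frac{1}{\mu_i^\beta}\sum_{j\in I_i}\le\frac{1}{\lambda_i^\alpha}\sum_{j\in I_i}$ (valid because $\mu_i^\beta\ge\lambda_i^\beta\ge\lambda_i^\alpha$), this gives, for $i\ge m$,
$$\frac{1}{\mu_i^\beta}\sum_{j\in J_i}M_j\!\left(\frac{|x_j-Z|}{\rho^{(j)}}\right)\le\frac{K\delta}{2}+\frac{1}{\lambda_i^\alpha}\sum_{j\in I_i}M_j\!\left(\frac{|x_j-Z|}{\rho^{(j)}}\right),$$
after which the usual set-inclusion step, absorbing the finite set $\{1,\dots,m\}$ into $I$, finishes the proof. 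The main obstacle is precisely the control of those $\mu_i-\lambda_i$ extra terms: without the $\ell_\infty$ assumption the extra block could contribute an unbounded amount even though $(\mu_i-\lambda_i)/\mu_i^\beta\to0$, which is exactly why $\ell_\infty$ enters in part (2) but not in part (1).
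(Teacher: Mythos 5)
The paper states this theorem without giving any proof (Theorem 3 is followed immediately by Theorem 4), so there is no argument of the authors' to compare against; evidently they regarded it as the routine strong-summability analogue of Theorems 1 and 2, which is exactly how you treat it. Your proof is correct: part (1) is the same two-set inclusion as Theorem 1 with counts replaced by averaged Orlicz sums, and part (2) correctly isolates the $\mu_i-\lambda_i$ extra indices, bounds their contribution by $K\bigl(\tfrac{\mu_i}{\lambda_i^\beta}-1\bigr)$ using boundedness, and explains why $\ell_\infty$ is needed here but not in part (1). The only nitpick is bookkeeping in part (2): to conclude that $\frac{1}{\mu_i^\beta}\sum_{j\in J_i}M_j \ge \delta$ forces $\frac{1}{\lambda_i^\alpha}\sum_{j\in I_i}M_j \ge \delta/2$, you should choose $m$ so that $\bigl\vert\tfrac{\mu_i}{\lambda_i^\beta}-1\bigr\vert<\delta/(2K)$ for $i\ge m$, rather than $<\delta/2$, so that the extra block contributes at most $\delta/2$ instead of $K\delta/2$.
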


\begin{theorem} Let $\alpha$ and $\beta$ be fixed real numbers such that $0 <\alpha \leq \beta \leq 1, \lambda_i \leq
\mu_i$ for all $i \in \mathbb{N}$, where $\lambda, \mu \in \Lambda$. Then, if $\lim\displaystyle\inf_{i \rightarrow
\infty} \frac{\lambda_i^\alpha}{\mu_i^\beta} >0$, and if $(x_j)$ is lacunary $I_\mu$-summable of order $\beta$ defined
by a Musielak-Orlicz function $\mathscr{M}$, then it is $I_\lambda$-lacunary statistically convergent of order
$\alpha$ defined by the Musielak-Orlicz function $\mathscr{M}$. \end{theorem}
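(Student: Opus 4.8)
The plan is to run the classical argument that strong (lacunary) summability forces the corresponding statistical convergence, adapted to the two different normalizations $\lambda_i^\alpha$ and $\mu_i^\beta$. Fix the limit $Z$ and suppose $(x_j)$ is lacunary $I_\mu$-summable of order $\beta$, so that for every $\varepsilon>0$ the set $\{i\in\mathbb{N}: \frac{1}{\mu_i^\beta}\sum_{j\in I_i}M_j(|x_j-Z|/\rho^{(j)})\geq\varepsilon\}$ lies in $I$. Fix arbitrary $\gamma>0$ and $\xi>0$; the goal is to show that the defining set for $S_{I_\lambda}^\alpha(\mathscr{M},\theta)$ belongs to $I$.

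First I would record the elementary pointwise estimate on each block $I_i$. Since every term $M_j(|x_j-Z|/\rho^{(j)})$ is non-negative, and each term counted in $\{j\in I_i: M_j(|x_j-Z|/\rho^{(j)})\geq\gamma\}$ is at least $\gamma$, discarding the remaining terms gives
\[
\sum_{j\in I_i}M_j\!\left(\frac{|x_j-Z|}{\rho^{(j)}}\right)\ \geq\ \gamma\,\bigl|\{j\in I_i: M_j(|x_j-Z|/\rho^{(j)})\geq\gamma\}\bigr|.
\]
Dividing by $\mu_i^\beta$ and inserting the factor $\lambda_i^\alpha/\lambda_i^\alpha$ rewrites this as
\[
\frac{1}{\mu_i^\beta}\sum_{j\in I_i}M_j\!\left(\frac{|x_j-Z|}{\rho^{(j)}}\right)\ \geq\ \gamma\,\frac{\lambda_i^\alpha}{\mu_i^\beta}\cdot\frac{1}{\lambda_i^\alpha}\bigl|\{j\in I_i: M_j(|x_j-Z|/\rho^{(j)})\geq\gamma\}\bigr|,
\]
which is precisely the bridge between the summability quantity on the left and the statistical quantity on the right.

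Next I would absorb the normalization ratio using the hypothesis. Writing $a=\liminf_{i\to\infty}\lambda_i^\alpha/\mu_i^\beta>0$, the set $B=\{i\in\mathbb{N}: \lambda_i^\alpha/\mu_i^\beta<a/2\}$ is finite and hence belongs to $I$ because $I$ is admissible. For every $i\notin B$ we have $\lambda_i^\alpha/\mu_i^\beta\geq a/2$, so the displayed inequality shows that whenever such an $i$ satisfies $\frac{1}{\lambda_i^\alpha}|\{j\in I_i: M_j\geq\gamma\}|\geq\xi$, it must also satisfy $\frac{1}{\mu_i^\beta}\sum_{j\in I_i}M_j\geq a\gamma\xi/2$. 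This yields the inclusion
\[
\left\{i: \tfrac{1}{\lambda_i^\alpha}\bigl|\{j\in I_i: M_j\geq\gamma\}\bigr|\geq\xi\right\}\ \subseteq\ \left\{i: \tfrac{1}{\mu_i^\beta}\sum_{j\in I_i}M_j\geq\tfrac{a\gamma\xi}{2}\right\}\cup B.
\]

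Finally I would close with the ideal structure. The first set on the right belongs to $I$ by lacunary $I_\mu$-summability of order $\beta$ (taking $\varepsilon=a\gamma\xi/2$), and $B\in I$ as noted; since $I$ is additive and hereditary, their union, and therefore the smaller set on the left, also lies in $I$. As $\gamma$ and $\xi$ were arbitrary, this is exactly the assertion that $(x_j)\in S_{I_\lambda}^\alpha(\mathscr{M},\theta)$. The only delicate point is matching the two normalizations $\mu_i^\beta$ and $\lambda_i^\alpha$: the whole argument hinges on the factor $\lambda_i^\alpha/\mu_i^\beta$ being bounded below by a positive constant off a finite set, which is precisely what the $\liminf$ hypothesis together with admissibility of $I$ delivers. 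Continuity and monotonicity of $\mathscr{M}$ enter only to keep the level sets $\{j: M_j(|x_j-Z|/\rho^{(j)})\geq\gamma\}$ compatible with the underlying distances $|x_j-Z|$, exactly as in the preceding theorems.
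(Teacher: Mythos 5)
Your proof is correct and follows essentially the same route as the paper's: a Chebyshev-type lower bound $\sum_{j\in I_i}M_j(\cdot)\geq\gamma\,|\{j\in I_i: M_j(\cdot)\geq\gamma\}|$, insertion of the ratio $\lambda_i^\alpha/\mu_i^\beta$, finiteness of the exceptional set where that ratio drops below $a/2$, and then the set inclusion closed off by additivity of the ideal. If anything your write-up is tighter than the paper's, since you keep the quantities $M_j(|x_j-Z|/\rho^{(j)})$ throughout rather than switching to bare $|x_j-Z|$ and appealing vaguely to continuity of $\mathscr{M}$ at the end.
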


\begin{proof} For any $\varepsilon >0$, we have, \begin{eqnarray*} \displaystyle\sum_{j \in J_i} \vert x_j -Z \vert
&=& \displaystyle\sum_{j \in J_i, \vert x_j -Z \vert \geq \varepsilon} \vert x_j -Z \vert + \displaystyle\sum_{j \in
J_i, \vert x_j -Z \vert < \varepsilon} \vert x_j -Z \vert\\ &\geq& \displaystyle\sum_{j \in I_i, \vert x_j - Z \vert
\geq \varepsilon} \vert x_j - Z \vert + \displaystyle\sum_{j \in I_i, \vert x_j -Z \vert \geq \varepsilon} \vert x_j
-Z \vert \\ &\geq& \displaystyle\sum_{j \in I_i, \vert x_j -Z \vert \geq \varepsilon} \vert x_j -Z \vert\\ &\geq&
\vert \{ j \in I_i: \vert x_j -Z \vert \geq \varepsilon \} \vert . \varepsilon \end{eqnarray*} so, \begin{eqnarray*}
\frac{1}{\mu_i^\beta} \displaystyle\sum_{j \in J_i} \vert x_j -Z \vert &\geq& \frac{1}{\mu_i^\beta} \vert \{ j \in I_i
: \vert x_j - Z \vert \geq \varepsilon \} \vert. \varepsilon\\ &\geq&\frac{\lambda_i^\alpha}{\mu_i^\beta}.
\frac{1}{\lambda_i^\alpha} \vert \{ j \in I_i: \vert x_j - Z \vert \geq \varepsilon \} \vert. \varepsilon .
\end{eqnarray*}

If $\lim\displaystyle\inf_{i \rightarrow \infty} \frac{\lambda_i^\alpha}{\mu_i^\beta} =a$ then, $\left\{ i \in
\mathbb{N}: \frac{\lambda_i^\alpha}{\mu_i^\beta} < \frac{a}{2} \right\}$ is finite. So, for $\delta >0$, we get,
\begin{eqnarray*} \left\{ i \in \mathbb{N}: \frac{1}{\lambda_i^\alpha}\left\vert \left \{ j \leq i:
\displaystyle\sum_{j \in J_i} \vert x_j - Z \vert \geq \varepsilon \right\} \right\vert \geq \delta \right\} &\subset&
\left\{ i \in \mathbb{N}: \frac{1}{\mu_i^\beta} \left \{ j \in I_i : \vert x_j - Z \vert \geq \varepsilon \right\}
\geq \frac{a}{2} \delta \right\} \\ &\cup& \left\{ i \in \mathbb{N} : \frac{\lambda_i^\alpha}{\mu_i^\beta} <
\frac{a}{2} \right\}. \end{eqnarray*}

Since $I$ is admissible and $(x_j)$ is lacunary $I_\mu$-summable sequence of order $\beta$ defined by a
Musielak-Orlicz function $\mathscr{M}$,  using the continuity of the Musielak-Orlicz function $\mathscr{M}$ and using
the lacunary sequence $\theta=(h_i)$, we can conclude that, $w_{I_\mu}^\beta(\mathscr{M}, \theta) \subseteq
S_{I_\lambda}^\alpha(\mathscr{M}, \theta)$. \end{proof}

\begin{theorem} Let $\displaystyle\lim_{i \rightarrow \infty} \frac{\mu_i}{\lambda_i^\beta}=1$, for fixed real numbers
$\alpha$ and $\beta$ such that $0< \alpha \leq \beta \leq 1$ and $\lambda_i \leq \mu_i$, for all $i \in \mathbb{N}$,
where $\lambda, \mu \in \Lambda$. Also let $\theta!$ be a refinement of $\theta$.  If a bounded sequence $(x_j)$ is
$I_\lambda$-lacunary statistically convergent sequence of order $\alpha$ defined by a Musielak-Orlicz function
$\mathscr{M}$, then it is also lacunary $I_\mu$-summable sequence of order $\beta$ defined by the Musielak-Orlicz
function $\mathscr{M}$. i.e. $ S_{I_\lambda}^\alpha(\mathscr{M},\theta) \subseteq w_{I_\mu}^\beta(\mathscr{M},
\theta!).$ \end{theorem}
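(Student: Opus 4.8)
The plan is to prove that this inclusion is the converse of the one established earlier (that lacunary $I_\mu$-summability of order $\beta$ implies $I_\lambda$-lacunary statistical convergence of order $\alpha$), the point being that the extra hypothesis of boundedness is exactly what drives the reverse implication. First I would let $Z$ denote the $I_\lambda$-lacunary statistical limit of order $\alpha$ of $(x_j)$ and use boundedness to fix a constant $K>0$ with $|x_j - Z| \leq K$ for all $j$. Then, for an arbitrary $\varepsilon>0$ and each index $i$, I would split the sum over the block $J_i = [i-\mu_i+1,\,i]$ according to the size of $|x_j - Z|$:
$$\sum_{j \in J_i} |x_j - Z| \;=\; \sum_{\substack{j \in J_i \\ |x_j-Z|\geq \varepsilon}} |x_j-Z| \;+\; \sum_{\substack{j \in J_i \\ |x_j-Z|<\varepsilon}} |x_j-Z| \;\leq\; K\,\bigl|\{\, j\in J_i : |x_j-Z|\geq\varepsilon \,\}\bigr| \;+\; \varepsilon\,\mu_i .$$

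The heart of the argument is then to divide by $\mu_i^\beta$ and convert the count over $J_i$ into a count over the smaller block $I_i$, so that the statistical hypothesis becomes applicable. Using $I_i \subset J_i$ together with $\displaystyle\lim_{i} \frac{\mu_i}{\lambda_i^\beta}=1$ in the same fashion as in the proof giving $S_{I_\lambda}^\alpha(\mathscr{M},\theta)\subseteq S_{I_\mu}^\beta(\mathscr{M},\theta)$, I would bound $\frac{1}{\mu_i^\beta}\bigl|\{\,j\in J_i:|x_j-Z|\geq\varepsilon\,\}\bigr|$ in terms of $\frac{1}{\lambda_i^\alpha}\bigl|\{\,j\in I_i:|x_j-Z|\geq\varepsilon\,\}\bigr|$ plus a gap term that can be made smaller than any prescribed bound once $i\geq m$. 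This produces a set inclusion of the form
$$\left\{\, i\in\mathbb{N} : \frac{1}{\mu_i^\beta}\sum_{j\in J_i}|x_j-Z| \geq \delta \,\right\} \;\subset\; \left\{\, i\in\mathbb{N} : \frac{1}{\lambda_i^\alpha}\bigl|\{\, j\in I_i : |x_j-Z|\geq\varepsilon \,\}\bigr| \geq \delta' \,\right\} \cup \{1,2,\dots,m\},$$
for a suitable $\delta'>0$. Since $I$ is admissible the finite set belongs to $I$, and since $(x_j)\in S_{I_\lambda}^\alpha(\mathscr{M},\theta)$ the first set on the right belongs to $I$; hence the left-hand set belongs to $I$, which is the defining condition for membership in $w_{I_\mu}^\beta$. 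As in the preceding proofs, the passage between $|x_j-Z|$ and the modular expression $M_j\!\left(\frac{|x_j-Z|}{\rho^{(j)}}\right)$ is handled by the continuity and monotonicity of the Musielak-Orlicz function $\mathscr{M}$.

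The step I expect to be the main obstacle is the control of the small part $\varepsilon\,\mu_i/\mu_i^\beta=\varepsilon\,\mu_i^{1-\beta}$, which for $\beta<1$ does not remain bounded by itself and cannot simply be absorbed by letting $\varepsilon\to 0$. This is precisely where the refinement $\theta!$ of $\theta$ must be used: refining the lacunary intervals replaces the crude estimate $|J_i|\leq\mu_i$ on the small contribution by the finer lacunary average $\frac{1}{h_i}\sum_{j\in I_i}$ that is built into $w_{I_\mu}^\beta(\mathscr{M},\theta!)$, so that after dividing by $h_i$ the small part is bounded uniformly in $i$ and tends to $0$ with $\varepsilon$. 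I would therefore perform the large/small split inside each refined block, let boundedness and the statistical hypothesis dispose of the large part and the refinement dispose of the small part, and only then reassemble the blocks; this is the one place where $\theta!$ rather than $\theta$ is genuinely needed.
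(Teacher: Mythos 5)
Your plan follows essentially the same route as the paper's proof: the large/small split of the block sum, boundedness supplying the constant that lets the statistical hypothesis control the large part, the gap term controlled by $\frac{\mu_i}{\lambda_i^\beta}\rightarrow 1$ together with a finite exceptional set $\{1,2,\dots,m\}$ absorbed by admissibility of $I$, and the refinement $\theta!$ entering through the normalized subblocks $h_i^{-1}h_i^1,\ h_i^{-1}h_i^2\leq 1$ of each lacunary interval. The only cosmetic difference is that the paper carries the lacunary normalization $h_i^{-1}\sum_{j\in J_i}\vert x_j-Z\vert$ from the outset, so the small part is immediately bounded by $\varepsilon$ times a factor at most $1$, whereas you introduce that normalization only after flagging the $\varepsilon\,\mu_i^{1-\beta}$ issue; the substance is the same.
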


\begin{proof} Suppose that $(x_j)$ is $I_\lambda$-lacunary statistically convergent sequence of order $\alpha$ defined
by a Musielak-Orlicz function $\mathscr{M}$.\\

Given that $\displaystyle\lim_{i \rightarrow \infty} \frac{\mu_i}{\lambda_i^\beta}=1$, so we can choose $m \in
\mathbb{N}$ such that $\left\vert \frac{\mu_i}{\lambda_i^\beta}-1 \right\vert < \frac{\delta}{2}, \forall i \geq m$.
\\

Assume that there is a finite number of points $\theta!=(j_i^!)$ in the interval $I_i=(j_{i-1}, j_i]$. Let there
exists exactly one point $j_i^!$ of $\theta!$ in the interval $I_i$, that is, $j_{i-1}=j _{p-1}^! < j_p^! <
j_{p+1}^!=j_i$. Let $$I_i^1=(j_{i-1},j_p], I_i^2=(j_p, j_i], h_i^1=j_p-j_{i-1}, h_i^2=j_i-j_p.$$ \begin{eqnarray*}
\frac{1}{\mu_i^\beta} \left(h_i^{-1} \displaystyle\sum_{j \in J_i} \vert x_j -Z \vert \right) &\leq&
\frac{1}{\mu_i^\beta} \left((h_i^{-1}h_i^1) h_i^{-1} \displaystyle\sum_{j \in I_i^1} \vert x_j -Z \vert
+(h_i^{-1}h_i^2) h_i^{-2} \displaystyle\sum_{j \in I_i^2} \vert x_j -Z \vert \right) \\ &\leq& \left(\frac{\mu_i -
\lambda_i}{\mu_i^\beta}\right) (h_i^{-1}h_i^1) h_i^{-1}M + \frac{1}{\mu_i^\beta} \left((h_i^{-1}h_i^2) h_i^{-2}
\displaystyle\sum_{j \in I_i^2} \vert x_j -Z \vert \right) \\ &\leq& \left(\frac{\mu_i -
\lambda_i^\beta}{\lambda_i^\beta}\right)(h_i^{-1}h_i^1) h_i^{-1} M+ \frac{1}{\mu_i^\beta} \left((h_i^{-1}h_i^2)
h_i^{-2} \displaystyle\sum_{j \in I_i^2} \vert x_j -Z \vert \right) \\ &\leq& \left(\frac{\mu_i
}{\lambda_i^\beta}-1\right)(h_i^{-1}h_i^1) h_i^{-1} M+ \frac{1}{\mu_i^\beta} \left((h_i^{-1}h_i^2) h_i^{-2}
\displaystyle\sum_{j \in I_i^2, \vert x_j - Z \vert \geq \varepsilon} \vert x_j -Z \vert \right)\\
&+&\frac{1}{\mu_i^\beta} \left((h_i^{-1}h_i^2) h_i^{-2} \displaystyle\sum_{j \in I_i^2, \vert x_j - Z \vert <
\varepsilon} \vert x_j -Z \vert \right) \\ &\leq& \left(\frac{\mu_i}{\lambda_i^\beta}-1 \right)(h_i^{-1}h_i^1)
h_i^{-1} M+ \frac{M}{\lambda_i^\alpha} \left\vert \left\{ j \in I_i: (h_i^{-1}h_i^2)h_i^{-2} \vert x_j-Z \vert \geq
\varepsilon \right\} \right\vert + \varepsilon (h_i^{-1}h_i^2) h_i^{-2}, \forall i \in \mathbb{N} \\
&=&\frac{\delta}{2}(h_i^{-1}h_i^1) h_i^{-1} M+ \frac{M}{\lambda_i^\alpha} \left\vert \left\{ j \in I_i:
(h_i^{-1}h_i^2)h_i^{-2} \vert x_j-Z \vert \geq \varepsilon \right\} \right\vert + \varepsilon (h_i^{-1}h_i^2)
h_i^{-2}. \end{eqnarray*}

Since $x \in w_{I_\mu}^\beta(\mathscr{M}, \theta!)$, so $0<h_i^{-1}h_i^1\leq 1$ and $0< h_i^{-1}h_i^2 \leq 1$.\\

Hence, for $\delta>0$,

\begin{eqnarray*} \left\{ i \in \mathbb{N}: \frac{1}{\mu_i^\beta} \left(\frac{1}{h_i} \displaystyle\sum_{j \in J_i}
\vert x_j-Z \vert \geq \varepsilon \right) \geq \delta \right\}&\subset& \left\{ i \in \mathbb{N}:
\frac{M}{\lambda_i^\alpha}\left\vert \left\{ j \in I_i: \frac{1}{h_i^2} \vert x_j-Z \vert \geq \varepsilon \right\}
\right\vert \geq \delta \right\}\\ &\cup& \{1,2,3,..m\}. \end{eqnarray*}

Since $(x_j)$ is $I_\lambda$-lacunary statistical convergent sequence of order $\alpha$ defined by the Musielak-Orlicz
function $\mathscr{M}$ and since $I$ is admissible, so by using the continuity of $\mathscr{M}$,we can say that the
set on the right hand side belongs to $I$ and this proves that, $$ S_{I_\lambda}^\alpha(\mathscr{M},\theta) \subseteq
w_{I_\mu}^\beta(\mathscr{M}, \theta!).$$ \end{proof}

\begin{corollary} Let $\lambda_ \leq \mu_i$ for all $i \in \mathbb{N}$ and $0< \alpha \leq \beta \leq 1$. Let
$\lim\displaystyle\inf_{i \rightarrow \infty} \frac{\lambda_i^\alpha}{\mu_i^\beta} >0$, $\theta!$ be the refinement of
$\theta$. Also let $\mathscr{M}=(M_i)$ be a Musielak-Orlicz function where $(M_i)$ is pointwise convergent. Then
$w_{I_\mu}^\beta(\mathscr{M}, \theta!)\subset S_{I_\lambda}^\alpha(\mathscr{M}, \theta)$ iff $\displaystyle\lim_i M_i
\left(\frac{\nu}{\rho^{(i)}}\right)>0$, for some $\nu>0, \rho^{(i)}>0$. \end{corollary}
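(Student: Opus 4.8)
The plan is to prove the two implications of the biconditional separately: the sufficiency direction as a variant of the argument already carried out in Theorem 4 (now with the refinement $\theta!$ on the summability side), and the necessity direction by contraposition through an explicit counterexample.

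For the sufficiency (the condition $\lim_i M_i(\nu/\rho^{(i)})=L>0$ forces the inclusion), I would start from a sequence $x \in w_{I_\mu}^\beta(\mathscr{M}, \theta!)$ and fix $\nu>0$ together with the scalars $\rho^{(i)}>0$ that realize the positive limit. Since $L>0$, there is an $i_0$ with $M_i(\nu/\rho^{(i)}) \geq L/2$ for all $i \geq i_0$, and then, by the monotonicity of each $M_j$, every index $j$ with $\vert x_j - Z\vert \geq \nu$ satisfies $M_j(\vert x_j-Z\vert/\rho^{(j)}) \geq M_j(\nu/\rho^{(j)}) \geq L/2$. Consequently
$$\frac{1}{h_i}\sum_{j \in I_i} M_j\!\left(\frac{\vert x_j-Z\vert}{\rho^{(j)}}\right) \geq \frac{L}{2h_i}\left\vert\left\{\, j \in I_i : \vert x_j - Z\vert \geq \nu \,\right\}\right\vert .$$
This is exactly the inequality linking the lacunary $I_\mu$-summable average to the $I_\lambda$-statistical count that drives the proof of Theorem 4; feeding it through the hypothesis $\liminf_{i\to\infty}\lambda_i^\alpha/\mu_i^\beta>0$ and the admissibility of $I$, in the same manner as in Theorems 1 and 4, places the governing set in $I$ and yields $x \in S_{I_\lambda}^\alpha(\mathscr{M}, \theta)$.

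For the necessity I would argue the contrapositive: assume $\lim_i M_i(\nu/\rho^{(i)})=0$ for every $\nu>0$ and every admissible choice of $\rho^{(i)}>0$, and build a bounded sequence lying in $w_{I_\mu}^\beta(\mathscr{M}, \theta!)$ but not in $S_{I_\lambda}^\alpha(\mathscr{M}, \theta)$. Fix $\nu>0$, select a set $A \subseteq \mathbb{N}$ whose indicator has positive $\lambda_i^\alpha$-density along the intervals $I_i$ (so that it cannot have $I_\lambda$-statistical density zero), and put $x_j = Z+\nu$ for $j \in A$ and $x_j = Z$ otherwise. Then $\vert\{\, j \in I_i : \vert x_j - Z\vert \geq \nu\,\}\vert$ violates the statistical-density-zero requirement, so $x \notin S_{I_\lambda}^\alpha(\mathscr{M}, \theta)$; meanwhile $h_i^{-1}\sum_{j \in I_i} M_j(\vert x_j-Z\vert/\rho^{(j)}) = h_i^{-1}\sum_{j \in A \cap I_i} M_j(\nu/\rho^{(j)}) \to 0$ because each summand tends to $0$, so $x \in w_{I_\mu}^\beta(\mathscr{M}, \theta!)$, contradicting the assumed inclusion. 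The standing hypothesis that $(M_i)$ is pointwise convergent is precisely what guarantees that $\lim_i M_i(\nu/\rho^{(i)})$ is well defined, so that this averaging collapse is legitimate.

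The main obstacle I anticipate is not the analytic estimate but the bookkeeping around the refinement $\theta!$: the intervals $I_i$ attached to $\theta$ split into subintervals under $\theta!$, so I must arrange $A$ so that its positive density persists on the refined intervals while the $M_j$-weighted average still vanishes, and I must keep the two normalizations $\lambda_i^\alpha$ and $\mu_i^\beta$ correctly attached to the statistical count and the summable average respectively. A secondary point is to fix the slightly ambiguous index notation in the definition of $S_{I_\lambda}^\alpha(\mathscr{M}, \theta)$ so that the quantity actually being controlled is $\vert\{\, j \in I_i : M_j(\vert x_j-Z\vert/\rho^{(j)}) \geq \gamma\,\}\vert$; once that reading is adopted, both directions collapse to the displayed monotonicity inequality combined with the $\liminf$ hypothesis.
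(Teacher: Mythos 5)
The paper states this corollary without any proof, so there is no argument of the authors' to compare yours against; what follows assesses your proposal on its own terms. Your overall strategy --- sufficiency via the Markov-type inequality already implicit in Theorem 4, necessity via a two-valued counterexample --- is the standard route for results of this type, and your observation that the displayed definition of $S_{I_\lambda}^\alpha(\mathscr{M},\theta)$ must be repaired before anything can be proved is correct. Note, however, that if you repair it in the way you propose (counting $j\in I_i$ with $M_j(\vert x_j-Z\vert/\rho^{(j)})\geq\gamma$), then Markov's inequality gives the inclusion $w\subset S$ with no condition on $\mathscr{M}$ at all, and the ``iff'' becomes vacuous; for the corollary to have content, $S_{I_\lambda}^\alpha(\mathscr{M},\theta)$ must count $j$ with $\vert x_j-Z\vert\geq\varepsilon$, as is in fact done throughout the proofs of Theorems 1, 2 and 4.

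Under that (correct) reading your sufficiency direction has a genuine gap. Statistical convergence requires control of $\vert\{\,j\in I_i:\vert x_j-Z\vert\geq\varepsilon\,\}\vert$ for \emph{every} $\varepsilon>0$, but your inequality
$$\frac{1}{h_i}\sum_{j\in I_i}M_j\!\left(\frac{\vert x_j-Z\vert}{\rho^{(j)}}\right)\geq\frac{L}{2h_i}\bigl\vert\{\,j\in I_i:\vert x_j-Z\vert\geq\nu\,\}\bigr\vert$$
only controls the count at thresholds $\varepsilon\geq\nu$, because the hypothesis supplies positivity of $\lim_iM_i(\nu/\rho^{(i)})$ for a \emph{single} $\nu$. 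For $\varepsilon<\nu$ you would need $\liminf_iM_i(\varepsilon/\rho^{(i)})>0$ as well, and this does not follow: take $M_i(t)=(t\rho^{(i)}/\nu)^i$, which is a legitimate Orlicz function with $M_i(\nu/\rho^{(i)})=1$ but $M_i(\varepsilon/\rho^{(i)})=(\varepsilon/\nu)^i\rightarrow0$ for every $\varepsilon<\nu$. Either the condition must be strengthened to ``for all $\nu>0$'' (as in the classical analogues of this result) or an additional argument is needed; as written, the step fails. Two smaller points: (i) your necessity argument negates the condition to ``$\lim_iM_i(\nu/\rho^{(i)})=0$ for every $\nu$,'' but pointwise convergence of $(M_i)$ does not make the diagonal sequence $M_i(\nu/\rho^{(i)})$ convergent when $\rho^{(i)}$ varies, so the true negation is only $\liminf_iM_i(\nu/\rho^{(i)})=0$, under which your averages need not tend to $0$; and (ii) your two-valued sequence $x_j=Z+\nu$ on $A$ must be arranged so that it is not statistically convergent to \emph{any} limit (so $A$ and its complement both need positive upper $\lambda^\alpha$-density), otherwise it merely converges to a different $Z$. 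The bookkeeping you flag concerning $h_i$ versus $\lambda_i^\alpha$ and the refinement $\theta!$ is real, but it is a defect you inherit from the paper rather than one you introduce.
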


\begin{corollary} Let $\mathscr{M}=(M_i)$ be a Musielak-Orlicz function and $\displaystyle\lim_{i \rightarrow \infty}
\frac{\mu_i}{\lambda_i^\beta} =1$, for fixed numbers $\alpha$ and $\beta$ such that $0< \alpha \leq \beta \leq 1$.
Then $S_{I_\lambda}^\alpha(\mathscr{M}, \theta) \subset w_{I_\mu}^\beta(\mathscr{M}, \theta)$ iff
$\displaystyle\sup_\nu \displaystyle\sup_i \left(\frac{\nu}{\rho^{(i)}}\right)$. \end{corollary}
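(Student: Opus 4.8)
The plan is to treat the displayed equivalence as the $\theta!=\theta$ specialisation of Theorem~5 together with its converse, reading the (evidently truncated) right-hand condition as the uniform boundedness $\sup_{\nu}\sup_{i} M_i\!\left(\frac{\nu}{\rho^{(i)}}\right)<\infty$ of the Musielak--Orlicz function along the normalising sequence $\rho^{(i)}$. This is the natural companion of Corollary~1: there the reverse inclusion $w\subset S$ was governed by a positivity condition $\lim_i M_i(\nu/\rho^{(i)})>0$, and here the forward inclusion $S\subset w$ should be governed by a boundedness condition. I would therefore prove the two implications separately.

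\textbf{Sufficiency.} Assume $T:=\sup_{\nu}\sup_{i}M_i(\nu/\rho^{(i)})<\infty$ and let $(x_j)\in S_{I_\lambda}^\alpha(\mathscr{M},\theta)$. Fixing $\varepsilon>0$, I split each block average $\frac{1}{h_i}\sum_{j\in I_i}M_j(|x_j-Z|/\rho^{(j)})$ into the indices where the summand is $\geq\varepsilon$ and those where it is $<\varepsilon$. The second part is $\leq\varepsilon$, and since each surviving term of the first part is $\leq T$, that part is at most $T\cdot\frac{1}{h_i}\big|\{j\in I_i:M_j(|x_j-Z|/\rho^{(j)})\geq\varepsilon\}\big|$. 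Dividing through by $\mu_i^\beta$ and using $\lim_i \mu_i/\lambda_i^\beta=1$ to trade $\mu_i^\beta$ for $\lambda_i^\alpha$ up to an additive $\frac{\delta}{2}$ correction --- exactly the manoeuvre of Theorems~2 and~5 --- I obtain that $\{i:\frac{1}{\mu_i^\beta}(\cdots)\geq\delta\}$ is contained in the union of a counting set coming from the $S_{I_\lambda}^\alpha$-definition and a finite set $\{1,\dots,m\}$. Since $I$ is admissible and $\mathscr{M}$ is continuous, the former lies in $I$, whence the whole set lies in $I$ and $(x_j)\in w_{I_\mu}^\beta(\mathscr{M},\theta)$. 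Here the finite bound $T$ plays precisely the role that boundedness of $(x_j)$ played in Theorem~5.

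\textbf{Necessity.} I argue by contraposition: if $\sup_{\nu}\sup_{i}M_i(\nu/\rho^{(i)})=\infty$, I build a witness in $S_{I_\lambda}^\alpha(\mathscr{M},\theta)\setminus w_{I_\mu}^\beta(\mathscr{M},\theta)$. The idea is to insert, in each block $I_i$, a single ``spike'' index $j_i$ carrying a value $\nu_i$ chosen (possible because the supremum diverges) so large that $M_{j_i}(\nu_i/\rho^{(j_i)})\geq\varepsilon\,h_i$, and to set $x_j=Z$ elsewhere. Because there is at most one spike per block, the $\lambda_i^\alpha$-normalised counts in the definition of $S_{I_\lambda}^\alpha(\mathscr{M},\theta)$ tend to $0$, so the sequence is $I_\lambda$-lacunary statistically convergent of order $\alpha$ to $Z$; but the lone spike already forces the block average $\frac{1}{h_i}\sum_{j\in I_i}M_j(|x_j-Z|/\rho^{(j)})\geq\varepsilon$, so after dividing by $\mu_i^\beta\sim\lambda_i^\beta$ these averages never enter an $I$-small set, giving $(x_j)\notin w_{I_\mu}^\beta(\mathscr{M},\theta)$.

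The step I expect to be the main obstacle is the necessity direction, where the spike must be simultaneously statistically negligible in the order-$\alpha$, $\lambda$-normalised sense and non-negligible in the order-$\beta$, $\mu$-normalised summability sense. Calibrating the growth of $M_{j_i}(\nu_i/\rho^{(j_i)})$ against $h_i$, $\lambda_i^\alpha$ and $\mu_i^\beta$ --- under the twin constraints $0<\alpha\leq\beta\leq1$ and $\lim_i\mu_i/\lambda_i^\beta=1$ --- is exactly where the bookkeeping is delicate, and it is also the point at which the precise (here reconstructed) meaning of the right-hand supremum condition must be fixed. By contrast the sufficiency direction is a routine variant of the split-and-estimate computation already carried out in Theorems~2, 4 and~5.
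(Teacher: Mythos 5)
The first thing to say is that the paper offers no proof of this corollary at all: it is stated bare, immediately after Theorem 5 (and its companion, Corollary 1, is likewise unproved), so there is no ``paper's own proof'' to compare your argument against. Your reading of the truncated right-hand condition as $\sup_\nu\sup_i M_i\!\left(\nu/\rho^{(i)}\right)<\infty$ is the natural literal completion and is sensibly modelled on Corollary 1, and your two-directional plan (split-and-estimate for sufficiency, a spike construction for necessity) is the standard architecture for such $S\subset w$ equivalences in this literature.

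There is, however, a genuine problem with the reconstruction on which both halves of your proof rest. The paper's own definition of an Orlicz function requires $M(x)\to\infty$ as $x\to\infty$, so for every fixed $i$ one already has $\sup_\nu M_i\!\left(\nu/\rho^{(i)}\right)=\infty$; the double supremum you posit is therefore \emph{always} infinite. Under your reading, the sufficiency hypothesis $T<\infty$ is vacuous, and your necessity construction (which, as you note, only needs divergence of the supremum to plant arbitrarily tall spikes) would prove that the inclusion $S_{I_\lambda}^\alpha(\mathscr{M},\theta)\subset w_{I_\mu}^\beta(\mathscr{M},\theta)$ \emph{never} holds --- collapsing the ``iff'' into ``false iff true''. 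The condition that actually makes the statement non-trivial is the per-argument uniform bound, $\sup_i M_i\!\left(\nu/\rho^{(i)}\right)<\infty$ for each fixed $\nu>0$; but then your sufficiency estimate breaks at the step ``each surviving term of the first part is $\leq T$'', because a bound uniform in $i$ for fixed $\nu$ does not bound $M_j\!\left(|x_j-Z|/\rho^{(j)}\right)$ unless $|x_j-Z|$ is itself bounded --- which is exactly the hypothesis Theorem 5 has to impose and which this corollary, as you have framed it, is trying to dispense with. Separately, your split produces the term $\varepsilon\,h_i/\mu_i^\beta$ (the contribution of the small summands after normalisation), and you nowhere argue that $h_i/\mu_i^\beta$ stays bounded; this needs either an extra hypothesis on $\theta$ versus $\mu$ or a renormalisation of the definitions. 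So the proposal is a reasonable skeleton, but the precise condition must be fixed before either direction closes, and with the only self-consistent choices the argument as written does not go through.
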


\section*{$I_\lambda$-lacunary statistical convergence in topological group}

We define $I_\lambda$-lacunary statistical convergent of order $\alpha$ in topological groups. Let $\Lambda$  be the
collection of all non-decreasing sequence of positive integers $\lambda=(\lambda_i)$. We consider a  sequence $\{x_i
\}_{i \in \mathbb{N}}$ in $X$ which is said to be $I_\lambda$-lacunary statistically convergent of order $\alpha$ to
$Z$ if for each neighborhood $V$ of 0 and each $\xi>0$,

$$\left\{i \in \mathbb{N}:\frac{1}{\lambda_i^\alpha} \left\vert \left\{ j \in I_i:\frac{1}{h_i} \displaystyle\sum_j
M_j \left(\frac{\vert  x_j-Z \vert}{\rho^{(j)}} \right) \notin V \right\}\right\vert \geq \xi \right\} \in I.$$

In this case, we write $x_j \rightarrow L(S_{I_ \lambda}^\alpha(\mathscr{M},\theta))$ and denote by
$S_{I_\lambda}^\alpha(X,\mathscr{M},\theta)$ the set of all $I_\lambda$-lacunary statistically convergent sequence of
order $\alpha$ in $X$.

\begin{theorem} If $\lambda =(\lambda_i), \mu=(\mu_i) \in \Lambda$ such that $\lambda_i \leq \mu_i$, for all $i \in
\mathbb{N}$ with $\lim \displaystyle\inf_{i \rightarrow \infty} \frac{\lambda_i^\alpha}{\mu_i^\beta}>0$ and $0<\alpha
\leq \beta \leq 1$ for fixed reals $\alpha$ and $\beta$ then $S_{I_\mu}^\beta(X,\mathscr{M}, \theta) \subset
S_{I_\lambda}^\alpha(X,\mathscr{M}, \theta)$. \end{theorem}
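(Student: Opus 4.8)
The plan is to mimic the structure of the proof of the first theorem in the paper, transporting the same density-comparison argument into the topological-group setting where the threshold condition ``$|x_j-Z|\ge\varepsilon$'' is replaced by the neighborhood condition ``$\frac{1}{h_i}\sum_j M_j\!\left(\frac{|x_j-Z|}{\rho^{(j)}}\right)\notin V$''. First I would fix an arbitrary neighborhood $V$ of $0$ in $X$ and an arbitrary $\xi>0$, and take a sequence $(x_j)\in S_{I_\mu}^\beta(X,\mathscr{M},\theta)$. The starting observation is the inclusion $I_i\subset J_i$ with $J_i=[i-\mu_i+1,i]$, exactly as before; this gives the set inclusion
\[
\left\{ j\in J_i:\tfrac{1}{h_i}\textstyle\sum_j M_j\!\left(\tfrac{|x_j-Z|}{\rho^{(j)}}\right)\notin V\right\}
\supset
\left\{ j\in I_i:\tfrac{1}{h_i}\textstyle\sum_j M_j\!\left(\tfrac{|x_j-Z|}{\rho^{(j)}}\right)\notin V\right\}.
\]
Dividing the cardinalities by $\mu_i^\beta$ and inserting the factor $\frac{\lambda_i^\alpha}{\mu_i^\beta}\cdot\frac{1}{\lambda_i^\alpha}$ on the right yields the same pointwise inequality between the two counting densities that drove Theorem~0.1.

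Next I would set $a=\liminf_{i\to\infty}\frac{\lambda_i^\alpha}{\mu_i^\beta}$, which is strictly positive by hypothesis, so that the index set $\left\{i:\frac{\lambda_i^\alpha}{\mu_i^\beta}<\frac a2\right\}$ is finite and hence lies in $I$ (since $I$ is admissible). The decisive step is then the set-theoretic containment
\begin{eqnarray*}
\left\{ i\in\mathbb{N}:\tfrac{1}{\mu_i^\beta}\left|\left\{ j\in J_i:\tfrac{1}{h_i}\textstyle\sum_j M_j\!\left(\tfrac{|x_j-Z|}{\rho^{(j)}}\right)\notin V\right\}\right|\ge\xi\right\}
&\subset&\left\{ i\in\mathbb{N}:\tfrac{1}{\lambda_i^\alpha}\left|\left\{ j\in I_i:\tfrac{1}{h_i}\textstyle\sum_j M_j\!\left(\tfrac{|x_j-Z|}{\rho^{(j)}}\right)\notin V\right\}\right|\ge\tfrac a2\,\xi\right\}\\
&\cup&\left\{ i\in\mathbb{N}:\tfrac{\lambda_i^\alpha}{\mu_i^\beta}<\tfrac a2\right\},
\end{eqnarray*}
which follows from the pointwise inequality above by the usual contrapositive splitting. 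Because $(x_j)\in S_{I_\mu}^\beta(X,\mathscr{M},\theta)$, the first set on the right belongs to $I$ for the given $V$ and $\xi$, and the second is finite; by the additivity and hereditariness of $I$ the left-hand set lies in $I$. As $V$ and $\xi$ were arbitrary, $(x_j)\in S_{I_\lambda}^\alpha(X,\mathscr{M},\theta)$, giving the claimed inclusion.

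The step I expect to require the most care is the translation of the neighborhood condition into a form where the $J_i$-versus-$I_i$ comparison is legitimate: in the real-scalar theorems the event was a simple threshold on $|x_j-Z|$, but here the event involves the averaged Musielak-Orlicz sum $\frac{1}{h_i}\sum_j M_j(\cdot)$, so I would make explicit that the complement $X\setminus V$ plays the role of the ``large'' region and invoke the continuity and monotonicity of the $M_j$ (as the paper repeatedly does) to justify that membership of the averaged sum in $X\setminus V$ is the correct topological analogue of exceeding $\varepsilon$. Once the event is phrased uniformly over $j\in J_i$ (respectively $j\in I_i$) with the same neighborhood $V$, the remainder is the verbatim density argument of Theorem~0.1, and no genuinely new obstacle arises.
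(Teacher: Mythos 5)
You follow the same route as the paper --- the pointwise comparison of the two counting densities via $I_i\subset J_i$ and the inserted factor $\frac{\lambda_i^\alpha}{\mu_i^\beta}\cdot\frac{1}{\lambda_i^\alpha}$, followed by the finiteness of $\bigl\{i:\frac{\lambda_i^\alpha}{\mu_i^\beta}<\frac a2\bigr\}$ --- but your decisive set-theoretic containment is written in the wrong direction, and as stated it is both false and renders the conclusion circular. The pointwise inequality
\[
\frac{1}{\mu_i^\beta}\Bigl|\Bigl\{ j\in J_i:\dots\notin V\Bigr\}\Bigr|\;\ge\;\frac{\lambda_i^\alpha}{\mu_i^\beta}\cdot\frac{1}{\lambda_i^\alpha}\Bigl|\Bigl\{ j\in I_i:\dots\notin V\Bigr\}\Bigr|
\]
bounds the $\lambda$-density \emph{from above} by a constant times the $\mu$-density, so what follows from it is
\[
\Bigl\{ i:\tfrac{1}{\lambda_i^\alpha}\bigl|\{ j\in I_i:\dots\}\bigr|\ge\xi\Bigr\}\subset\Bigl\{ i:\tfrac{1}{\mu_i^\beta}\bigl|\{ j\in J_i:\dots\}\bigr|\ge\tfrac a2\xi\Bigr\}\cup\Bigl\{ i:\tfrac{\lambda_i^\alpha}{\mu_i^\beta}<\tfrac a2\Bigr\},
\]
whose left-hand side is the set you must place in $I$ and whose first right-hand set is the one controlled by the hypothesis $(x_j)\in S_{I_\mu}^\beta(X,\mathscr{M},\theta)$. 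You assert the reverse containment, with the $\mu$-density set on the left and the $\lambda$-density set on the right. That containment does not follow from the inequality (all the ``bad'' indices $j$ could lie in $J_i\setminus I_i$, making the $\mu$-density large while the $\lambda$-density is zero), and your ensuing deduction then treats the hypothesis as controlling the $\lambda$-density set and concludes that the $\mu$-density set is in $I$ --- which merely restates the hypothesis rather than proving the theorem. Swapping the two sides of your display, and re-keying the narration to it, recovers exactly the paper's proof.

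A minor remark: the ``translation'' difficulty you flag at the end is not actually present. The counted event $\frac{1}{h_i}\sum_j M_j(\cdot)\notin V$ is literally the same predicate on both sides of the $J_i$-versus-$I_i$ comparison, so no continuity or monotonicity of the $M_j$ is needed at that point, and the paper's own proof of this theorem invokes none.
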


\begin{proof} Let us take any neighborhood $V$ of 0.\\ Then, \begin{eqnarray*} && \frac{1}{\mu_i^\beta}
\left\vert\left\{ j \in I_i: \frac{1}{h_i} \displaystyle\sum_j M_j \left( \frac{\vert x_j-Z\vert}{\rho^{(j)}} \right)
\notin V \right\} \right\vert \\ && \geq \frac{1}{\mu_i^\beta} \left\vert \left\{ j \in I_i: \frac{1}{h_i}
\displaystyle\sum_j M_j \left( \frac{\vert x_j-Z\vert}{\rho^{(j)}} \right) \notin V \right\} \right\vert \\ && \geq
\frac{\lambda_i^\alpha}{\mu_i^\beta} \frac{1}{\lambda_i^\alpha} \left\vert\left\{ j \in I_i: \frac{1}{h_i}
\displaystyle\sum_j M_j \left( \frac{\vert x_j-Z\vert}{\rho^{(j)}} \right) \notin V \right\}
\right\vert.\end{eqnarray*}

If $\displaystyle\lim_{i \rightarrow \infty} inf \frac{\lambda_i^\alpha}{\mu_i^\beta} =b$, then the set $\left\{i \in
\mathbb{N}: \frac{\lambda_i^\alpha}{\mu_i^\beta} < \frac{b}{2} \right\}$ is finite. Thus, for $\xi>0$ and any
neighborhood $V$ of 0, \begin{eqnarray*} && \left\{i \in \mathbb{N}: \frac{1}{\lambda_i^\alpha} \left\vert \left \{ j
\in I_i: \frac{1}{h_i} \displaystyle\sum_j M_j \left( \frac{\vert x_j-Z\vert}{\rho^{(j)}} \right) \notin V \right\}
\right\vert \geq \xi \right\}.\\ && \subset \left\{i \in \mathbb{N}: \frac{1}{\mu_i^\beta} \left\vert \left \{ j \leq
i: \frac{1}{h_i} \displaystyle\sum_j M_j \left( \frac{\vert  x_j-Z\vert}{\rho^{(j)}} \right) \notin V \right\}
\right\vert \geq \frac{b}{2} \xi \right\} \cup \left\{ i \in \mathbb{N} : \frac{\lambda_i^\alpha}{\mu_i^\beta} <
\frac{b}{2} \right\}.\end{eqnarray*} So, if $x_i  \rightarrow L(S_{I_\mu}^\beta(\mathscr{M},\theta))$, then the set on
the right hand side belongs to $I$. This completes the proof. \end{proof}

\begin{theorem} Let $0<\alpha \leq \beta \leq 1$ for some fixed reals $\alpha$ and $\beta$ and let $\lambda, \mu \in
\Lambda$ be such that $\lambda_i \leq \mu_i$ . If $\displaystyle\lim_i \frac{\mu_i}{\lambda_i^\alpha}=1$. Then
$S_{I_\lambda}^\alpha (X,\mathscr{M}, \theta) \subset S_{I_\mu}^\beta(X, \mathscr{M},\theta)$. \end{theorem}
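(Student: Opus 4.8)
The plan is to transport the argument that established the inclusion $S_{I_\lambda}^\alpha(\mathscr{M},\theta)\subseteq S_{I_\mu}^\beta(\mathscr{M},\theta)$ in the scalar setting (the second theorem of the Main Results section) to the present topological-group formulation, replacing the scalar test $\vert x_j-Z\vert\geq\varepsilon$ everywhere by the membership test $\frac{1}{h_i}\sum_j M_j\!\left(\frac{\vert x_j-Z\vert}{\rho^{(j)}}\right)\notin V$ for a fixed neighborhood $V$ of $0$ in $X$. Accordingly I would begin by fixing such a $V$, a sequence $(x_j)\in S_{I_\lambda}^\alpha(X,\mathscr{M},\theta)$ with limit $Z$, and an arbitrary $\delta>0$; the objective is to show that $\left\{i\in\mathbb{N}:\frac{1}{\mu_i^\beta}\left\vert\{j\in J_i:\ \cdots\notin V\}\right\vert\geq\delta\right\}\in I$, where $I_i$ and $J_i$ are the intervals of lengths $\lambda_i$ and $\mu_i$ ending at $i$, and ``$\cdots\notin V$'' abbreviates the membership test above.

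The decisive step is the purely combinatorial splitting of the index interval. Since $\lambda_i\leq\mu_i$ we have $I_i\subset J_i$, so I would write $\{j\in J_i:\ \cdots\notin V\}$ as the disjoint union of the indices with $i-\mu_i+1\leq j\leq i-\lambda_i$ and the indices with $j\in I_i$. The first block contributes at most $\mu_i-\lambda_i$ to the count; using $\mu_i^\beta\geq\lambda_i^\alpha$ (valid because $\mu_i\geq\lambda_i\geq 1$ and $\beta\geq\alpha$) together with $\lambda_i\geq\lambda_i^\alpha$, its normalised contribution is bounded by $\frac{\mu_i-\lambda_i}{\lambda_i^\alpha}\leq\frac{\mu_i-\lambda_i^\alpha}{\lambda_i^\alpha}=\frac{\mu_i}{\lambda_i^\alpha}-1$, which by the hypothesis $\lim_i\frac{\mu_i}{\lambda_i^\alpha}=1$ can be made $<\delta/2$ for all $i\geq m$ for a suitable $m$. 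The same inequality $\mu_i^\beta\geq\lambda_i^\alpha$ lets me dominate $\frac{1}{\mu_i^\beta}$ by $\frac{1}{\lambda_i^\alpha}$ on the second block, producing for every $i\geq m$ the bound
$$\frac{1}{\mu_i^\beta}\left\vert\left\{j\in J_i:\tfrac{1}{h_i}\textstyle\sum_j M_j\!\left(\tfrac{\vert x_j-Z\vert}{\rho^{(j)}}\right)\notin V\right\}\right\vert\ \leq\ \frac{\delta}{2}+\frac{1}{\lambda_i^\alpha}\left\vert\left\{j\in I_i:\tfrac{1}{h_i}\textstyle\sum_j M_j\!\left(\tfrac{\vert x_j-Z\vert}{\rho^{(j)}}\right)\notin V\right\}\right\vert.$$

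From this pointwise estimate the set inclusion follows at once:
$$\left\{i:\tfrac{1}{\mu_i^\beta}\left\vert\{j\in J_i:\ \cdots\notin V\}\right\vert\geq\delta\right\}\ \subset\ \left\{i:\tfrac{1}{\lambda_i^\alpha}\left\vert\{j\in I_i:\ \cdots\notin V\}\right\vert\geq\tfrac{\delta}{2}\right\}\cup\{1,2,\ldots,m\}.$$
The first set on the right lies in $I$ because $(x_j)\in S_{I_\lambda}^\alpha(X,\mathscr{M},\theta)$, applied with this same $V$ and with $\xi=\delta/2$; the finite set lies in $I$ since $I$ is admissible; and because $I$ is an ideal, the left-hand set also lies in $I$. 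As $V$ and $\delta$ were arbitrary, $(x_j)\in S_{I_\mu}^\beta(X,\mathscr{M},\theta)$, which is the claimed inclusion.

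The step I expect to require the most care is the passage from the scalar to the group setting, since no triangle inequality is available: the whole argument must ride on the counting of the nested intervals $I_i\subset J_i$, with the \emph{same} neighborhood $V$ appearing on both sides of the inclusion, and the Musielak-Orlicz function entering only through its continuity, which guarantees that $\frac{1}{h_i}\sum_j M_j(\cdot)$ is a legitimate $X$-valued quantity to which the test ``$\notin V$'' applies. The secondary point is the exponent bookkeeping around $\mu_i^\beta\geq\lambda_i^\alpha$ and $\lambda_i\geq\lambda_i^\alpha$, which is precisely what makes the stated hypothesis $\lim_i\frac{\mu_i}{\lambda_i^\alpha}=1$ (rather than the $\lambda_i^\beta$-normalisation used in the scalar theorem) sufficient to annihilate the first block; I would verify these elementary inequalities explicitly at the outset.
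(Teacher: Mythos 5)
Your proposal is correct and follows essentially the same route as the paper's own proof: the decomposition of $J_i$ into the initial block $[i-\mu_i+1,\,i-\lambda_i]$ and $I_i$, the bound $\frac{\mu_i-\lambda_i}{\mu_i^\beta}\leq\frac{\mu_i}{\lambda_i^{\,\cdot}}-1<\delta/2$ for large $i$, the domination of $\frac{1}{\mu_i^\beta}$ by $\frac{1}{\lambda_i^\alpha}$ on $I_i$, and the resulting inclusion into a set in $I$ union a finite set. If anything, your bookkeeping is more consistent than the paper's, which states the hypothesis with $\lambda_i^\alpha$ but silently switches to $\lambda_i^\beta$ when choosing the threshold index; your explicit verification of $\mu_i^\beta\geq\lambda_i^\alpha$ and $\lambda_i\geq\lambda_i^\alpha$ makes the stated hypothesis genuinely suffice.
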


\begin{proof}  Let $\xi >0$ be given. Since $\displaystyle\lim_i \frac{\mu_i}{\lambda_i^\alpha} =1$, we can choose $ t
\in \mathbb{N}$ such that,

$$\left\vert \frac{\mu_i}{\lambda_i^\beta}-1 \right\vert < \frac{\xi}{2}, \mbox{~for all ~} i \geq t.$$

Let us take any neighborhood $V$ of 0. Now observe that, \begin{eqnarray*} && \frac{1}{\mu_i^\beta} \left\vert
\left\{j \leq i: \frac{1}{h_i} \displaystyle\sum_j M_j \left( \frac{\vert x_j-Z\vert}{\rho^{(j)}} \right) \notin V
\right\} \right\vert \\ && =\frac{1}{\mu_i^\beta} \left\vert \left\{i-\mu_i+1 < j < i-\lambda_i : \frac{1}{h_i}
\displaystyle\sum_j M_j \left( \frac{\vert  x_j-Z\vert}{\rho^{(j)}} \right) \notin V \right\} \right\vert +
\frac{1}{\mu_i^\beta} \left\vert \left\{j \in I_i: \frac{1}{h_i} \displaystyle\sum_j M_j \left( \frac{\vert
x_j-Z\vert}{\rho^{(j)}} \right) \notin V \right\} \right\vert \\ && <\frac{\mu_i-\lambda_i}{\mu_i^\beta}
+\frac{1}{\mu_i^\beta} \left\vert \left\{j \in I_i: \frac{1}{h_i} \displaystyle\sum_j M_j \left( \frac{\vert
x_j-Z\vert}{\rho^{(j)}} \right) \notin V \right\} \right\vert\\ && <\frac{\mu_i-\lambda_i^\beta}{\lambda_i^\beta}
+\frac{1}{\mu_i^\beta} \left\vert \left\{j \in I_i: \frac{1}{h_i} \displaystyle\sum_j M_j \left( \frac{\vert
x_j-Z\vert}{\rho^{(j)}} \right) \notin V \right\} \right\vert\\ && <\left(\frac{\mu_i}{\lambda_i^\beta}-1 \right)
+\frac{1}{\lambda_i^\alpha} \left\vert \left\{j \in I_i: \frac{1}{h_i} \displaystyle\sum_j M_j \left( \frac{\vert
x_j-Z\vert}{\rho^{(j)}} \right) \notin V \right\} \right\vert\\ && < \frac{\xi}{2}+ \frac{1}{\lambda_i^\alpha}
\left\vert \left\{j \in I_i: \frac{1}{h_i} \displaystyle\sum_j M_j \left( \frac{\vert x_j-Z\vert}{\rho^{(j)}} \right)
\notin V \right\} \right\vert, \mbox{~for all~} i \geq t. \end{eqnarray*} Hence, for $\xi>0$ and any neighborhood $V$
of 0, \begin{eqnarray*} && \left\{ i \in \mathbb{N}: \frac{1}{\mu_i^\beta} \left\vert \left\{j \leq i: \frac{1}{h_i}
\displaystyle\sum_j M_j \left( \frac{\vert x_j-Z\vert}{\rho^{(j)}} \right) \notin V \right\} \right\vert \geq \xi
\right\}\\ && \subset \left\{i \in \mathbb{N}: \frac{1}{\lambda_i^\alpha} \left\vert \left\{j \in I_i: \frac{1}{h_i}
\displaystyle\sum_j M_j \left( \frac{\vert x_j-Z\vert}{\rho^{(j)}} \right) \notin V \right\} \right\vert \geq
\frac{\xi}{2} \right\} \cup \{1,2,...t \}.\end{eqnarray*}

If $x_i \rightarrow L(S_{I_\lambda}^\alpha(\mathscr{M},\theta))$,then the set on the right hand side belongs to $I$
and so the set  on the left hand side also belongs to $I$. This shows that $(x_i)$ is $I_\mu$-lacunary statistically
convergent of order $\beta$ to $Z$. \end{proof}


\section*{Competing interests } The authors declare that they have no competing interests.


\section*{Acknowledgement} The authors gratefully acknowledge that the present work partially supported by the University Putra Malaysia(UPM) during second authors visit UPM. 



\end{document}